\documentclass[11pt,a4paper]{article}
\usepackage{amsmath, amscd}
\usepackage{amssymb}
\usepackage{mathrsfs, calrsfs}
\usepackage{graphicx, subcaption}
\usepackage{amsthm}
\usepackage{textcomp}
\usepackage{epsfig} 
\usepackage{epstopdf}

\usepackage{array}
\usepackage{setspace}

\newtheorem{theorem}{Theorem}[section]

\newtheorem{lemma}{Lemma}[section]
\newtheorem{example}{Example}[section]

\newcommand{\ioa}{\int_0^{a_{\dag}}}
\newcommand{\dt}{\Delta t}
\newcommand{\ad}{a_{\dag}}

\title{\Large A convergent numerical scheme to a parabolic equation with a nonlocal boundary condition}
\date{}
\author{Bhargav Kumar Kakumani
\footnote{Department of Mathematics, BITS-Pilani, Hyderabad Campus, 500078, India.
	 E-mail:\textit{bhargav@hyderabad.bits-pilani.ac.in}} 
 \footnote{Corresponding author }  
\and Suman Kumar Tumuluri
\footnote{School of Mathematics and Statistics, University of Hyderabad, Hyderabad, 500046, India.
	 E-mail:\textit{suman.hcu@gmail.com}}
 }

\begin{document}
 \maketitle
\pagestyle{plain}
\pagenumbering{arabic}

\begin{abstract}
	In this paper, a numerical scheme for a nonlinear McKendrick--von Foerster equation with diffusion in age (MV-D) with the Dirichlet boundary condition 
	is proposed. The main idea to derive the scheme is to use the discretization based on the method of characteristics to the convection part, and the finite difference method to the rest of the terms. The nonlocal terms are dealt with the quadrature methods. As a result, an implicit scheme is obtained for the boundary value problem under consideration. The consistency, and the convergence of the proposed numerical scheme is established. Moreover, numerical simulations are presented to validate the theoretical results.
\end{abstract}

\textbf{Keywords} 
Age-structured population model, Convergence, Interpolation.\\
\textbf{AMS Subject Classification} 
 35A01, 35A02, 35K20, 65N12, 92D25.



\section{Introduction}
The structured population models distinguish individuals from one another based on characteristics such as size, age, and so on. Moreover the birth, and the death rates also depend on these characteristics. Structured models are broadly classified as age structured models (\cite{chipot,cushing,farkas,ian1,michel1,michel2,perthame,per_suman}), gender structured models (\cite{calsina,ian2005,thieme}), size structured models (\cite{per.1,farkas1,farkas2,diekmann}), and maturity structured models (\cite{dyson1,dyson2,per_michler,rubi}).\\

\noindent 
Many size structured models describing the behavior of the cell populations have been introduced and studied (\cite{per_ry} and the references therein). There are four important phases in the mammalian cell cycle, they are $G_1$ - phase, $S$ - phase, $G_2$ - phase and $M$ - phase. In $S$-phase and $M$-phase there will be DNA replication and in $M$-phase cell division occurs. The authors in \cite{basse} considered a size structured model and used DNA content as a measure of the genetic term ‘cell size’, because the vital transitions (i.e., from $G_1$-phase to $S$-phase, from $M$-phase to cell division, etc.) are accompanied in DNA content. These changes in the DNA content can be measured by flow cytometry, but sometimes the cells are not evenly illuminated when they pass through the flow cytometer. Hence, in order to account for these changes in the cell cycle model, the authors introduced the diffusive term. \\

In this paper, we consider the McKendrick–von Foerster equation with diffusion in the age variable with the Dirichlet boundary condition. Let $u(t,x)$ denote the population density with age $x$ at time $t$. Let $d$ and $ B$ denote the mortality rate and the fertility rate, respectively. Let $\psi$ and $S$ denote the competition weight and weighted population, respectively. Let $g$ denote a non-negative $C^1$ function defined on $[0,\infty)$ that satisfies some assumptions which
will be described later. \\
With these notations, we consider the following nonlinear MV-D in\\ $Q:=(0,T)\times (0,\ad)$, 
\begin{equation}
\label{mvd-1}
\left\{ \begin{array}{ll}
u_t(t,x) + u_x(t,x) +d(x,S(t))u(t,x) = \epsilon u_{xx}(t,x), \hspace{0.1cm} (t,x)\in Q,\medskip \\
u(t,0) = g\Big(\displaystyle \ioa B(x)u(t,x)dx\Big), \hspace{0.2cm} t\in (0,T),\medskip \\
u(t,\ad)=0, \hspace{0.1cm} t\in (0,T),\medskip \\
u(0,x) = u_0(x),\hspace{0.1cm} x\in (0,\ad),
\end {array} \right.
\end{equation}
where
\begin{equation}
\label{mvd-2}
S(t)=\ioa \psi(x)u(t,x)dx.
\end{equation}

\noindent We assume that $u\in W^{3,\infty}(Q)$ and the compatibility criterion, $i.e.,$ $$u_0(0)=g\Big(\displaystyle \ioa B(x)u_0(x)dx\Big).$$

\noindent Different variants of MV-D equations attracted the attention of many mathematicians. In \cite{tarik}, the authors considered the linear MV-D equation with Robin boundary data and studied the existence, uniqueness, and positivity of the solution. In addition, they proved the exponential decay of the solution to MV-D equation for large time to the steady state. The existence, uniqueness, and the long time behavior of the solution for a particular type of non-linear MV-D were proved in \cite{tarik-michel}. The authors in \cite{bharu-suman,michel-bharu} considered the nonlinear MV-D with Robin boundary data and proved the existence and uniqueness of its solution using the fixed point arguments. Moreover, the authors studied the asymptotic behavior of the solution towards its steady state using the notion of the General Relative Entropy. There is a lot of  literature available in the numerical study of McKendrick--von Foerster equation (MV) with Dirichlet boundary data and there are very few papers available related to MV-D with Robin boundary data (\cite{bharu-suman1}).  In \cite{lopez1,lopez2}, the authors developed the concept of stability to certain class of nonlinear problems. They proved that for a smooth discretization, the stability was equivalent to the stability of its linearization around the theoretical solution. This concept of stability works for MV type equation with Dirichlet boundary data. In \cite{abia,lopez3,lopez4,lopez5}, the authors considered age structured model with Dirichlet boundary data and developed a new numerical method to approximate the solution to the equation that they considered. The analysis involved theory of discretizations based on the notion of stability thresholds. In \cite{ian2,ian3}, Iannelli $et\ al.$ presented finite difference methods to find the numerical solution to the linear Lotka-McKendrick equation. Furthermore, they showed that the scheme was convergent in the maximum norm and also discussed the stability of the scheme.  In \cite{kim}, Kim $et\ al.$ used the collocation method along the characteristics to approximate the solution to the nonlinear MV equation with Dirichlet boundary data. Their scheme indeed converges and it is of the fourth order accuracy in the supremum norm. In \cite{sun}, the authors presented a finite difference numerical scheme to a class of nonlinear nonlocal equations with Robin type boundary conditions. By the method of reduction of order, the authors proved that the numerical scheme is second order accurate.\\
\noindent
In \cite{ben1}, the authors considered an age structured alcoholism model. They investigated the global behavior of the solution of the model using the basic reproduction number, and analysed the nature of the nontrivial equilibrium when the basic reproduction number is greater than one. The authors in \cite{ben2,ben3} studied a nonlinear epidemic model and obtained the existence result for the solutions. Using the basic reproduction number they provided a necessary and sufficient condition for global asymptotic stability of the free-equilibrium. In \cite{ben4}, Bentout $et\ al.$  presented a mathematical model that predicts the spread of the pandemic  COVID-19. By considering the age as a factor of the progress and severity of the disease, the authors proposed an age structured model. Using this model they predicted the size of the epidemic in the USA, the UAE, and Algeria.\\
\noindent 
There are many more mathematical models which are age structured. For instance, the model of stress erythropoiesis (see \cite{jcl1} and references therein), the model which describes cell dwarfism \cite{jcl2}.  Here, the authors proposed a numerical method to obtain an approximation to its solution and established the convergence of the numerical method. The authors in \cite{jcl3} considered a size structured cell population model. They proposed a second order numerical method to approximate its solution and showed the convergence of the proposed scheme.    \\
\noindent 
In this paper, we present a numerical method to find an approximate solution to equation \eqref{mvd-1}--\eqref{mvd-2}. The present numerical method is based on the method of characteristics (MOC) (see \cite{jim}). It is easy to compute the solution to nonlinear boundary value problem \eqref{mvd-1}--\eqref{mvd-2} using the method that we propose because there is no need to solve any nonlinear equation at the discrete level. On the other hand, we take the central difference approximation for the diffusion term at the $n$-th stage so that we can have the convergence of the scheme.  \\

\noindent 
This paper is organised as follows. In Section $2$, we propose a numerical scheme to equations (\ref{mvd-1})--(\ref{mvd-2}) and prove that the numerical solution obtained from the proposed scheme converges to the solution to the MV-D equation. Numerical results are presented in Section $3$.

\section{The finite difference scheme}
\label{num_moc}
Let $h$ and  $\Delta t$ denote the uniform step size of age variable and time variable, respectively.  Moreover, we denote\medskip\\
\centerline{$x_j=jh,\ t^n=n\Delta t,\ u_j^n=u(t^n,x_j),\ j=1, \ldots, M,$  $n=1, \ldots, N,$} \\
and \\
\centerline{$\bar{x}_j=x_j-\Delta t,\ \bar{u}_j^n=u(t^n,\bar{x}_j),\ j=0,1,\ldots ,M$ and $n=0,1,\ldots,N$.} \\

\noindent 
Observe that (\ref{mvd-1})--(\ref{mvd-2})
reduces to a hyperbolic equation when $\epsilon=0$. Thus we use the method of characteristics to discretize the convection part (namely, $u_t+u_x$). The standard central difference approximation is used to deal with the diffusion part (namely $u_{xx}$). We discretize the hyperbolic part using the backward finite difference $i.e.,$
\[
u_t(t,x)+u_x(t,x) \sim \frac{1}{\dt}\big(u(t,x)-u(t-\dt,x-\dt)\big).
\]
Let $U_i^n $ denote the approximation of $u(t^n,x_i)$ at every grid $(t^n,x_j)$. Assume that $q_j$'s are the Newton-Cote numbers such that the approximation of the integral is\\
\[ \int_0^{\ad}B(x)u(t,x)dx \sim \sum\limits_{j=1}^M h q_j B(x_j)U_{j}^{n}, \]
which is of the order $h^3$. With this notation, we now discretize (\ref{mvd-1})--(\ref{mvd-2}) to get
\begin{equation}
\label{num-sch}
\left\{ \begin{array}{ll}
U_j^{n}=\bar{U}_j^{n-1} - \dt d(x_j,S^{n-1})U_{j}^{n-1} + \frac{\epsilon \dt}{h^2}\big(U_{j+1}^{n}-2U_{j}^{n}+U_{j-1}^{n}\big),  \medskip \\
\hfill \hspace{0.2cm} i=1,2,\ldots, M-1, \hspace{0.1cm} n=1,2,\ldots, N,\medskip \\
U_0^{n}=g\Big(\sum\limits_{j=1}^M h q_j B(x_j)U_{j}^{n-1}\Big), \hspace{0.2cm} n=1,\ldots, N,\medskip \\
U_M^{n}=0, \hspace{0.2cm} n=1,\ldots, N,\medskip \\
U_j^{0}=u_0(x_j), \hspace{0.2cm} j=1,\ldots, M,\medskip \\
S^n=\sum\limits_{i=1}^M hq_j \psi(x_j)U_{j}^{n},
\end {array} \right.
\end{equation}

\noindent where $\bar{U}_j^{n-1}$ is the evaluation of the linear interpolation value taken between $U_j^{n-1}$ and $U_{j-1}^{n-1}$ at $\bar{x}$.

\noindent 
We now present a consistency result which holds whenever the mortality rate satisfies the Lipschitz condition given in the following lemma.

\begin{lemma}
	\label{consistency}
	Assume that there exists $L>0$ such that \\
	\begin{equation}
	\label{d_lips}
	|d(x,S_1)-d(x,S_2)|\leq L|S_1-S_2|,\ S_1,S_2\geq 0,\ x\in (0,\ad).
	\end{equation}
	Then following local truncation results hold:
	\begin{enumerate}
		\item $u_t(t^n,x_j)+u_x(t^n,x_j) = \displaystyle \frac{u^n_j - \bar{u}_j^{n-1}}{\dt} + O(\dt),$ \medskip 
		\item $d(x_j,S(t^n)) = d(x_j,\sum\limits_{i=1}^M hq_j \psi_j u_{j}^{n})+ O(h^3),$ \medskip
		\item $u_{xx}(t^n,x_j)= \displaystyle \frac{u_{j+1}^{n}-2u_{j}^{n}+u_{j-1}^{n}}{h^2} + O(h^2).$
	\end{enumerate}
\end{lemma}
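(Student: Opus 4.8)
The plan is to prove each of the three local truncation estimates separately using Taylor expansions, since $u \in W^{3,\infty}(Q)$ provides enough regularity for the required remainder terms. Throughout, I will use the regularity bound to control derivatives uniformly on $Q$, so all $O(\cdot)$ constants depend only on $\|u\|_{W^{3,\infty}(Q)}$ and the data.

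\medskip
\noindent\textbf{Part (3).} I would start here because it is the most standard. Fixing $t = t^n$ and expanding $u(t^n, x_{j\pm 1}) = u(t^n, x_j \pm h)$ in a Taylor series about $x_j$ up to third order with integral remainder, the odd-order terms cancel when forming $u_{j+1}^n - 2u_j^n + u_{j-1}^n$, leaving $h^2 u_{xx}(t^n,x_j)$ plus a remainder controlled by $\|u_{xxx}\|_{L^\infty(Q)} h^3$ (more precisely the standard argument gives $O(h^2)$ after dividing by $h^2$, though strictly with only $W^{3,\infty}$ one obtains $o(h)$ or $O(h)$ — I will state it as in the lemma, noting that the three bounded derivatives and an averaged form of the remainder suffice). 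Dividing by $h^2$ gives the claim.

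\medskip
\noindent\textbf{Part (1).} This is the method-of-characteristics truncation. Along the characteristic line $s \mapsto (t^n - s, x_j - s)$, the directional derivative is exactly $u_t + u_x$. I would write
\[
u_j^n - \bar u_j^{n-1} = u(t^n,x_j) - u(t^n - \dt, x_j - \dt) = \int_0^{\dt} \big(u_t + u_x\big)(t^n - s, x_j - s)\, ds,
\]
and then Taylor-expand the integrand about $s = 0$; the zeroth-order term integrates to $\dt\,(u_t+u_x)(t^n,x_j)$ and the remainder is bounded by $\|D^2 u\|_{L^\infty(Q)} \dt^2$. Dividing by $\dt$ yields the $O(\dt)$ estimate. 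One subtlety: $\bar u_j^{n-1}$ as used in the scheme is the \emph{linear interpolant} between $U_{j}^{n-1}$ and $U_{j-1}^{n-1}$, not the exact value $u(t^{n-1},\bar x_j)$; but in this lemma $\bar u_j^{n-1}$ denotes the exact value $u(t^n,\bar x_j)$ per the notation table, so the interpolation error is a separate matter handled in the convergence proof, and I will flag this explicitly to avoid confusion.

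\medskip
\noindent\textbf{Part (2).} Here the Lipschitz hypothesis \eqref{d_lips} is the crux. By the quadrature accuracy assumption stated before the scheme, $\int_0^{\ad}\psi(x)u(t^n,x)\,dx = \sum_{i=1}^M h q_j \psi_j u_j^n + O(h^3)$ (the Newton–Cotes error, valid since $\psi u(t^n,\cdot)$ is smooth enough). Writing $S_1 = S(t^n)$ and $S_2 = \sum_i h q_j \psi_j u_j^n$, the Lipschitz bound gives $|d(x_j,S_1) - d(x_j,S_2)| \le L|S_1 - S_2| = L\cdot O(h^3) = O(h^3)$, which is exactly the claim.

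\medskip
\noindent The main obstacle is not any single computation but rather bookkeeping: ensuring the remainder terms are genuinely controlled by $W^{3,\infty}(Q)$ (as opposed to requiring pointwise third derivatives), and keeping the distinction between the exact-value notation $\bar u_j^{n-1}$ used in the lemma and the interpolated quantity $\bar U_j^{n-1}$ used in the scheme. I would address the regularity point by phrasing the remainders via averaged (integral-form) Taylor remainders, which only require the relevant derivatives to lie in $L^\infty$, and I expect part (1) to need the most care since it couples the time and age directions along the characteristic.
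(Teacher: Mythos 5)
Your proposal is correct and follows essentially the same route as the paper: a Taylor/characteristic argument for (1) (your integral-along-the-characteristic form is equivalent to the paper's difference quotient of the directional derivative $\partial u/\partial\tau$), the Newton--Cotes quadrature error combined with the Lipschitz bound \eqref{d_lips} for (2), and the standard central-difference Taylor expansion for (3). Your side remark that $W^{3,\infty}$ regularity strictly gives only $O(h)$ rather than $O(h^2)$ in (3) is a fair observation (the paper glosses over this), but it is harmless since the overall convergence rate claimed is $O(h+\Delta t)$.
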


\begin{proof} Let $u$ be the solution to (\ref{mvd-1})--(\ref{mvd-2}). Let $\tau:=\tau(x)$ be the characteristic direction associated with the operator $\partial_t+\partial_x$. Then we have
	\[
	\frac{\partial u}{\partial \tau} = \frac{1}{\sqrt{2}}\frac{\partial u}{\partial t} + \frac{1}{\sqrt{2}}\frac{\partial u}{\partial x}.
	\] 
	We approximate the characteristic derivative as 
	\[
	\sqrt{2} \frac{\partial u(t^n,x_j)}{\partial \tau} = \sqrt{2} \frac{u(t^n,x_j) - u(t^{n-1},\bar{x}_j)}{\big[(x_j-\bar{x}_j)^2+(t^{n}-t^{n-1})^2\big]^{\frac{1}{2}}} + O(\dt) = \displaystyle \frac{u^n_j - \bar{u}_j^{n-1}}{\dt} + O(\dt).
	\]
	In order to prove $(ii)$, we use that $d$ satisfies Lipschitz condition (\ref{d_lips}). Since $q_j$'s are chosen such that \\
	\[
	|S(t^n)-h\sum\limits_{j=1}^Mq_j\psi_ju_j^n| = O(h^3),
	\]
	it follows that \\
	\[ |d(x_j,S(t^n)) - d(x_j,\sum\limits_{i=1}^M hq_j \psi_j u_{j}^{n})|= O(h^3).
	\]
	Note that $(iii)$ is straightforward from the formal Taylor series expansion of $u$ about $(t^n,x_j)$.
\end{proof}

\noindent 
We now state and prove the main result of this paper, $i.e.,$ the convergence of the numerical solution $U_j^n$ to the solution $u$ to (\ref{mvd-1})--(\ref{mvd-2}) at the grid points. \\

\begin{theorem}
	\label{convergence_thm} Assume the hypothesis of Lemma \ref{consistency}. Moreover assume that
	\begin{equation}
	\label{cvg_assumption}
	\|g'\|_{\infty}\|B\|_{\infty}\ad<1,\ \frac{\dt}{h^2}\leq \frac{1}{2}.
	\end{equation}
	Fix $T>0$ and let $N\in\mathbb{N},\Delta t>0$ be such that $N\Delta t=T.$
	Then the solution to numerical scheme (\ref{num-sch}) converges to the solution to (\ref{mvd-1})--(\ref{mvd-2}) as $h\rightarrow 0,\ \Delta t\rightarrow 0$. Moreover, there exists a positive constant $\tilde C,$ independent of $h,\Delta t$, such that \medskip\\
	\centerline{$ \max\limits_{\substack{{0\leq j\leq M}\\ {0\leq n\leq N}}}|u_j^n-U_j|\leq \tilde C(h+\Delta t).$}
	
\end{theorem}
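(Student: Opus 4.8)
The plan is to track the grid error $e_j^n:=u_j^n-U_j^n$ and its spatial maximum $E^n:=\max_{0\le j\le M}|e_j^n|$, and to derive a discrete Gronwall-type recursion for $E^n$; the contraction hypothesis $\|g'\|_\infty\|B\|_\infty\ad<1$ in \eqref{cvg_assumption} is what will keep the nonlocal boundary contribution from accumulating. First I would write the exact solution as an approximate solution of \eqref{num-sch} with a small remainder. Multiplying \eqref{mvd-1} by $\dt$, invoking Lemma \ref{consistency}(1) and (3), and using that $t\mapsto S(t)$ and $u$ are Lipschitz in $t$ (so that $d(x_j,S(t^n))u_j^n=d(x_j,S(t^{n-1}))u_j^{n-1}+O(\dt)$), one obtains for $1\le j\le M-1$
\[
u_j^n=\bar u_j^{n-1}-\dt\, d(x_j,S(t^{n-1}))u_j^{n-1}+\tfrac{\epsilon\dt}{h^2}\bigl(u_{j+1}^n-2u_j^n+u_{j-1}^n\bigr)+\dt\,R_j^n,\qquad |R_j^n|\le C(\dt+h^2),
\]
where $\bar u_j^{n-1}=u(t^{n-1},\bar x_j)$ is the exact value at the foot of the characteristic. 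Subtracting the interior line of \eqref{num-sch} and writing $\lambda:=\epsilon\dt/h^2\ge 0$ gives, for $1\le j\le M-1$,
\[
(1+2\lambda)e_j^n-\lambda e_{j+1}^n-\lambda e_{j-1}^n=\bigl(\bar u_j^{n-1}-\bar U_j^{n-1}\bigr)-\dt\bigl[d(x_j,S(t^{n-1}))u_j^{n-1}-d(x_j,S^{n-1})U_j^{n-1}\bigr]+\dt\,R_j^n .
\]

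Second, I would bound the right-hand side. Since $\dt\le h$ (a consequence of $\dt/h^2\le 1/2$ once $h\le 1$), $\bar x_j\in[x_{j-1},x_j]$ and $\bar U_j^{n-1}=\tfrac{\dt}{h}U_{j-1}^{n-1}+(1-\tfrac{\dt}{h})U_j^{n-1}$ is a convex combination; comparing with the same convex combination of exact nodal values and using the linear interpolation error at $\bar x_j$, which lies within $\dt$ of the node $x_j$, yields $|\bar u_j^{n-1}-\bar U_j^{n-1}|\le \tfrac12\|u_{xx}\|_\infty h\dt+E^{n-1}$. For the reaction difference I would split off $d(x_j,S^{n-1})e_j^{n-1}$ and combine Lemma \ref{consistency}(2), i.e.\ $d(x_j,S(t^{n-1}))=d(x_j,h\sum_i q_i\psi_i u_i^{n-1})+O(h^3)$, with \eqref{d_lips}, which gives $|d(x_j,h\sum_i q_i\psi_i u_i^{n-1})-d(x_j,S^{n-1})|\le L\|\psi\|_\infty\bigl(\sum_i h|q_i|\bigr)E^{n-1}$; using boundedness of $d$ and $u$ this produces a bound $C(h^3+E^{n-1})$ for the bracket (boundedness of the discrete solution, hence of $d(x_j,S^{n-1})$, being carried along the induction). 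Hence the whole right-hand side is at most $(1+C\dt)E^{n-1}+C\dt(h+\dt)$ in absolute value. Now let $k$ be an index with $E^n=|e_k^n|$. If $1\le k\le M-1$, the triangle inequality and $|e_{k\pm1}^n|\le|e_k^n|$ give $(1+2\lambda)|e_k^n|\le 2\lambda|e_k^n|+(1+C\dt)E^{n-1}+C\dt(h+\dt)$, hence $E^n\le(1+C\dt)E^{n-1}+C\dt(h+\dt)$, independently of $\lambda$.

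Third, the two boundary nodes. If $k=M$ then $E^n=0$. If $k=0$ then $e_0^n=g\bigl(\int_0^{\ad}B\,u(t^n,\cdot)\bigr)-g\bigl(h\sum_j q_jB_jU_j^{n-1}\bigr)$, and the mean value theorem together with the $O(h^3)$ quadrature error, the $O(\dt)$ time increment of $u$, and $\sum_j h|q_j|=\ad$ (the standard composite Newton--Cotes weights being nonnegative) yields $|e_0^n|\le \gamma E^{n-1}+C(h^3+\dt)$ with $\gamma:=\|g'\|_\infty\|B\|_\infty\ad<1$. Collecting the three cases at the node realizing $E^n$,
\[
E^n\le\max\bigl\{\,(1+C\dt)E^{n-1}+C\dt(h+\dt)\,,\ \gamma E^{n-1}+C(h^3+\dt)\,\bigr\},\qquad E^0=0 .
\]

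Finally I would close this recursion, and this is the one genuinely non-routine step: the boundary branch carries a term $C(h^3+\dt)$ that is \emph{not} multiplied by $\dt$, so iterating a naive uniform bound $E^n\le \mathrm{const}\cdot(h+\dt)$ does not close, and the factor $\gamma<1$ must be exploited. I would set $\beta:=C(h^3+\dt)/(1-\gamma)$, the fixed point of $s\mapsto\gamma s+C(h^3+\dt)$, and prove by induction that $E^n\le\beta+W^n$, where $W^0=0$ and $W^n=(1+C\dt)W^{n-1}+C\dt(h+\dt)$ (with $C$ enlarged as needed): in the boundary branch $E^n\le\gamma(\beta+W^{n-1})+C(h^3+\dt)=\beta+\gamma W^{n-1}\le\beta+W^n$, while in the interior branch the extra term $C\dt\beta$ created is itself $O(\dt(h+\dt))$ since $\beta=O(h+\dt)$ and is absorbed into $C\dt(h+\dt)$. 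A discrete Gronwall estimate then gives $W^n=((1+C\dt)^n-1)(h+\dt)\le(e^{CT}-1)(h+\dt)$ for $n\dt\le T$, whence
\[
\max_{0\le n\le N}E^n\le\beta+\max_{0\le n\le N}W^n\le\tilde C\,(h+\dt),
\]
with $\tilde C$ independent of $h,\dt$ (using $h^3\le h$), which is the claimed bound; convergence follows by letting $h,\dt\to 0$. Everything outside this fixed-point shift — the truncation bookkeeping from Lemma \ref{consistency}, the convexity of the interpolation weights, and the solvability of the strictly diagonally dominant tridiagonal step — is standard.
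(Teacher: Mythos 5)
Your proof is correct and follows the same overall skeleton as the paper's: consistency via Lemma \ref{consistency}, an error recursion for $\rho_j^n=u_j^n-U_j^n$, the Lipschitz/quadrature estimate for the reaction term, the interpolation-error bound at the foot of the characteristic, the mean-value estimate at the boundary node with constant $\gamma=\|g'\|_{\infty}\|B\|_{\infty}\ad$, and a discrete Gronwall iteration. You differ, to your advantage, at two points. First, you remove the implicit diffusion term by a discrete maximum-principle (diagonal dominance) argument at the node attaining the maximum, which works for every value of $\epsilon\dt/h^2$; the paper passes from (\ref{num-sch1}) to (\ref{bdry_c}) without detailing this step, and its hypothesis $\dt/h^2\leq\tfrac12$ plays no visible role in your argument beyond guaranteeing $\dt\leq h$ for the interpolation. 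Second, and more substantively, you close the coupled interior/boundary recursion with the fixed-point shift $\beta=C(h^3+\dt)/(1-\gamma)$, which is exactly where $\gamma<1$ enters: the boundary residual is not multiplied by $\dt$ (and, as you correctly note, it is $O(h^3+\dt)$ rather than the paper's stated $O(h^3)$, because of the time lag $U^{n-1}$ in the discrete boundary condition), so a naive iteration would let it accumulate over $N=T/\dt$ steps. The paper's final two displayed chains iterate the interior bound as if the previous maximum were always attained at an interior node and never make explicit how assumption (\ref{cvg_assumption}) blocks this accumulation; your induction $E^n\leq\beta+W^n$ supplies precisely that missing bookkeeping. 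So this is a tightened, more explicit version of the same proof rather than a genuinely different route.
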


\begin{proof}
	
	\noindent In view of Lemma \ref{consistency}, we get the consistency of the scheme as follows:
	\begin{equation}
	\label{consistency-sch}
	\left\{ \begin{array}{ll}
	\displaystyle\frac{u_j^{n}-\bar{u}_j^{n-1}}{\dt} +d(x_j,S(t^{n-1}))u_{j}^{n-1}  =\frac{\epsilon}{h^2}\big(u_{j+1}^{n}-2u_{j}^{n}+u_{j-1}^{n}\big) + e^n_j,  \medskip \\
	\hfill \hspace{0.2cm} j=1,2,\ldots,M-1, \hspace{0.1cm} n=1,2,\ldots,N,\medskip \\
	u_0^{n}=g\Big(\sum\limits_{j=1}^M h q_j B(x_j)u_{j}^{n-1}\Big)+\tilde{e}_j^n, \hspace{0.2cm} n=1,\ldots,N,\medskip \\
	u_M^{n}=0, \hspace{0.2cm} n=1,\ldots,N,\medskip \\
	u_j^{0}=u_0(x_j), \hspace{0.2cm} j=1,\ldots,M,\medskip \\
	S(t^n)=\sum\limits_{i=1}^M hq_j \psi(x_j)u_{j}^{n}+\tilde{\tilde{e}}_j,\ n=1,\ldots,N,
	\end {array} \right.
	\end{equation}
	\noindent where $e_j^n = O(h+\dt),\ \tilde{e}_j^n=O(h^3)$ and $\tilde{\tilde{e}}_j=O(h^3)$.  \\
	
	\noindent Let $\rho_j^n=u_j^n-U_j^n,\ \bar{\rho}_j^n=\bar{u}_j^{n}-\bar{U}_j^n$ and $S^n=\sum\limits_{i=1}^M hq_j \psi(x_j)U_{j}^{n}$. From (\ref{num-sch})--(\ref{consistency-sch}), it follows that 
	
	\begin{equation}
	\label{num-sch1}
	\left\{ \begin{array}{ll}
	\displaystyle\frac{\rho_j^{n}-\bar{\rho}_j^{n-1}}{\dt} +d(x_j,S(t^{n-1}))u_{j}^{n-1} - d(x_j,S^{n-1})U_{j}^{n-1} \medskip \\
	\hspace{2cm} =\displaystyle\frac{\epsilon}{h^2}\big(\rho_{j+1}^{n}-2\rho_{j}^{n}+\rho_{j-1}^{n}\big) + e^n_j,  \medskip \\
	\hfill \hspace{0.2cm} j=1,2,\ldots,M-1, \hspace{0.1cm} n=1,2,\dots,N,\medskip \\
	\rho_0^{n}=g\Big(\sum\limits_{j=1}^M h q_j B(x_j)u_{j}^{n-1}\Big)-g\Big(\sum\limits_{j=1}^M h q_j B(x_j)U_{j}^{n-1}\Big)+\tilde{e}_j^n, \medskip \\
	\hfill 
	\hspace{0.2cm} n=1,\dots,N,\medskip \\
	\rho_M^{n}=0, \hspace{0.2cm} n=1,\dots,N,\medskip \\
	\rho_j^{0}=0, \hspace{0.2cm} j=1,\dots,M.
	\end {array} \right.
	\end{equation}
	\noindent We first estimate 
	\begin{equation*}
	\begin{array}{ll}
	|S(t^n)-S^{n}| &\leq\displaystyle \Big|\ioa \psi(x)u(t^n,x)dx - \sum\limits_{i=1}^M hq_j \psi(x_j)U_{j}^{n}\Big| \medskip \\
	& \leq \displaystyle \Big|\ioa \psi(x)u(t^n,x)dx - \sum\limits_{i=1}^M hq_j \psi(x_j)u_{j}^{n}\Big| + \Big| \sum\limits_{i=1}^M hq_j \psi(x_j)\rho_{j}^{n}\Big| \medskip \\
	& \leq O(h^3)+ \|\psi\|_{\infty}\ad \max\limits_{1\leq j\leq M} |\rho_j^n|,\ n\in \mathbb{N}.
	\end{array}
	\end{equation*}
	\noindent Furthermore, using the above estimate, we obtain\medskip\\
	$|d(x_j,S(t^{n-1}))u_{j}^{n-1} - d(x_j,S^{n-1})U_{j}^{n-1}|$
	\begin{equation}
	\label{d_estimate}
	\begin{array}{ll}
	& \leq |d(x_j,S^{n-1})\rho_{j}^{n-1}|
	+ |d(x_j,S(t^{n-1}) - d(x_j, S^{n-1})| |u_{j}^{n-1}|\medskip \\
	& \leq \big(\|d\|_{\infty}+L\|u\|_{\infty}\|\psi\|_{\infty}\ad\big)\max\limits_{1\leq j\leq M} |\rho_j^{n-1}|+O(h^3).
	\end{array}
	\end{equation}
	
	\noindent We now estimate the boundary term in (\ref{num-sch1}). In order to do that, we consider 
	\begin{equation}
	\label{bdry}
	\begin{array}{ll}
	|\rho_0^{n}|& =|g\Big(\sum\limits_{j=1}^M h q_j B(x_j)u_{j}^{n-1}\Big)-g\Big(\sum\limits_{j=1}^M h q_j B(x_j)U_{j}^{n-1}\Big)+\tilde{e}_j^n| \medskip \\
	& \leq |g'(\xi_j)| |\sum\limits_{j=1}^M h q_j B(x_j)\rho_{j}^{n-1}| + O(h^3) \medskip \\
	& \leq  \|g'\|_{\infty} \|B\|_{\infty}\ad \max\limits_{1\leq j\leq M} |\rho_j^{n-1}|+ O(h^3).
	\end{array}
	\end{equation}
	\noindent Let $I$ and $I_1$ denote the identity operator and the linear interpolation operator, respectively. We observe that 
	\[
	\bar{\rho}_j^{n-1}=I_1(\rho^{n-1})(\bar{x}_j) + (I-I_1)(u^{n-1})(\bar{x}_j),
	\]
	or  \[
	|\bar{\rho}_j^{n-1}| \leq \max\limits_{0\leq j\leq M} |\rho_j^{n-1}| + |(I-I_1)(u^{n-1})(\bar{x}_j)|.
	\]
	In view of the Peano kernel theorem, we have 
	\begin{equation}
	\label{peano_application}
	\max\limits_{1\leq j\leq M}|\bar{\rho}_j^{n-1}| \leq \max\limits_{0\leq j\leq M} |\rho_j^{n-1}| + \|u^{n-1}\|_{2,\infty}h\dt.
	\end{equation}
	From the first equation of (\ref{num-sch1}) and estimates (\ref{d_estimate})--(\ref{peano_application}), it follows that 
	\begin{equation}
	\label{bdry_c}
	\begin{array}{ll}
	\max\limits_{1\leq j\leq M}|\rho_j^{n}| &\leq  \max\limits_{1\leq j\leq M}|\bar{\rho}_j^{n-1}| + C\dt \max\limits_{1\leq j\leq M}|\rho_j^{n-1}| + O(h+\dt)\dt \medskip \\
	&\leq (1+C\dt)\max\limits_{0\leq j\leq M}|\rho_j^{n-1}|   + O(h+\dt)\dt.
	\end{array}
	\end{equation}
	
	\noindent From (\ref{bdry}) and (\ref{bdry_c}), we get
	\begin{equation*}
	\begin{array}{ll}
	|\rho_0^{n}|& \leq M\Big\{h^3 + (h+\dt)\dt \big[ 1 + (1+C\dt) + \cdots + (1+C\dt)^{N-2} \big] \Big\} \medskip \\
	& \leq M \Big\{ h^3 + (h+\dt)\dt N e^{CT} \Big\}\medskip \\
	& \leq  M \Big\{ h^3 + (h+\dt)T e^{CT}\Big\},
	\end{array}
	\end{equation*}
	and 
	\begin{equation*}
	\begin{array}{ll}
	\max\limits_{1\leq j\leq M}|\rho_j^{n}| &\leq  M(h+\dt)\dt \big[  1 + (1+C\dt) + \cdots + (1+C\dt)^{N-1} \big] \medskip \\
	& \leq  M(h+\dt)\dt N e^{CT}\medskip \\
	& \leq  M(h+\dt)T e^{CT}.
	\end{array}
	\end{equation*}
	
	\noindent Hence we find that
	\begin{equation}
	\label{convergence_result}
	\max\limits_{\substack{{0\leq j\leq M}\\ {0\leq n\leq N}}}
	|\rho_j^{n}| = O(h+\dt)\ {\rm as}\ h\rightarrow 0,\ \Delta t\rightarrow 0.
	\end{equation}
	This completes the proof of the promised result.
\end{proof}

\section{Numerical simulations}
\label{num_numerics}
In this section, we present some examples in which we have performed numerical simulations. In the first two examples, we have taken the linear version of \eqref{mvd-1}--\eqref{mvd-2}, $i.e.,\ d(x,S)=d(x)$, $g(x)=x,$ and $B$ is chosen such that the hypotheses of Theorem \ref{convergence_thm} hold. On the other hand, in the last two examples we have considered nonlinear equations.

\begin{example} \label{eg01_lnr}
	We first assume that $\ad=1$. Moreover assume that the vital rates $d,B,$ and the initial data $u_0$ are
	given by\medskip\\
	\centerline{$\displaystyle d(x,S)=\frac{3e^{-x}-e^{-1}}{e^{-x}-e^{-1}},\ B(x)=1+\frac{e^{-1}}{1-2e^{-1}}, \epsilon = 1.0, \ u_0(x)=e^{-x}-e^{-1}.$}
\end{example}
\noindent In this case, it is easy to verify that the solution to (\ref{mvd-1})--(\ref{mvd-2}) is given by $$u(t,x)=e^{-t}(e^{-x}-e^{-1}),\ \ t>0,\ x\in (0,1).$$ For the numerical solution, we have taken $h=0.0025, \Delta t = 3.125 \rm{x}10^{-6}$. In Figure \ref{eg1_linear}(a), we have shown the analytical solution and the numerical solution when $N=16000,$ and  $64000$. The absolute error is shown in Figure \ref{eg1_linear}(b). From these figures, it is clear that the numerical solution is very close to the analytical solution. In Figure \ref{eg1_linear_comp} we have presented the numerical plots for different values of $h$ and $\Delta t$ at $t=0.1$ and also compared the same with the corresponding analytical solution. It is evident from Figure \ref{eg1_linear_comp} that as $h$ and $\Delta t$ tend to zero, the numerical solution converges to the analytical solution.

\begin{figure}[h]
	\includegraphics[width=1.2\textwidth]{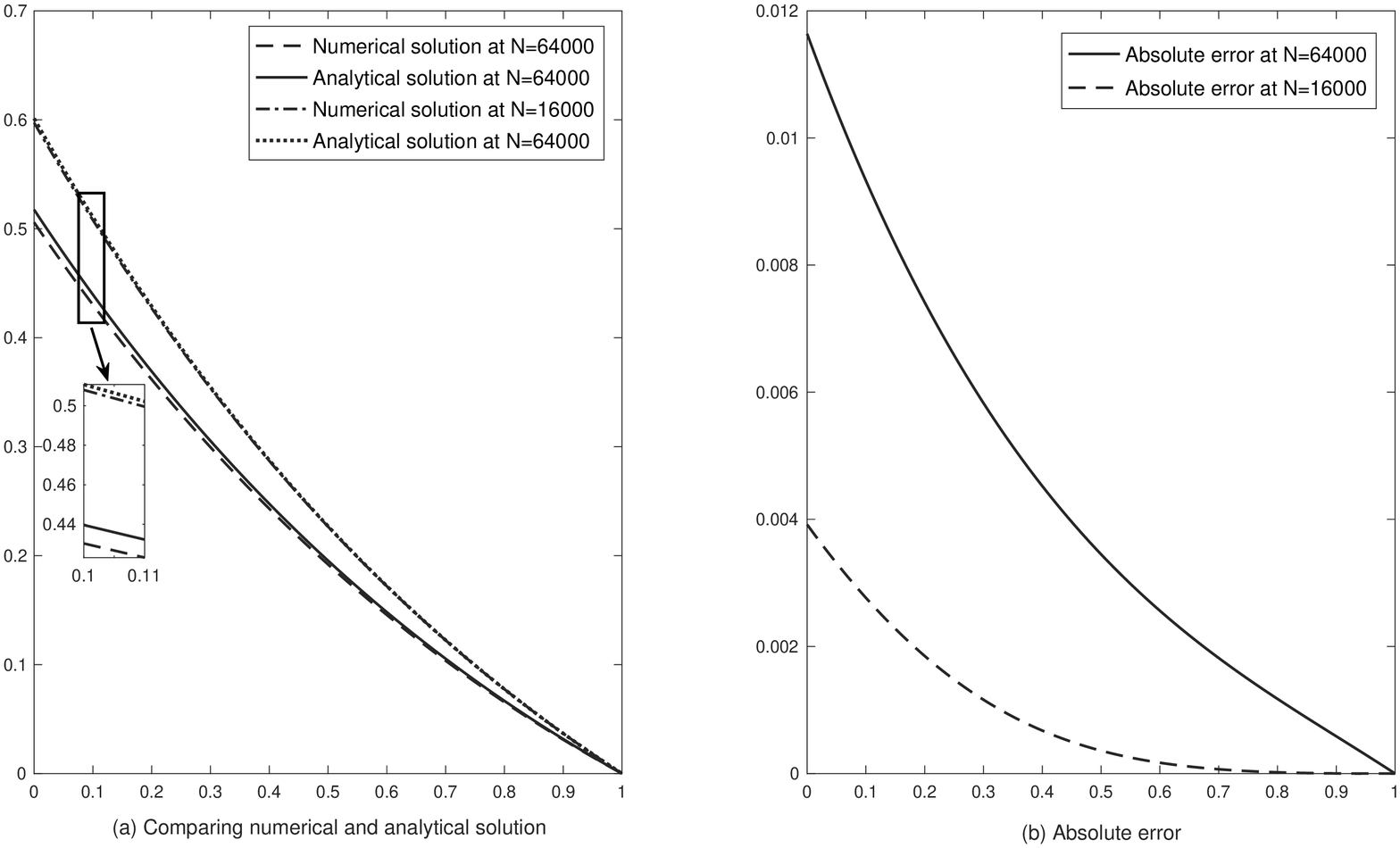}
	\caption{(a) The numerical solution and the exact solution to (\ref{mvd-1})--(\ref{mvd-2}) with $N=16000,\ 64000$ with vital rates given in Example \ref{eg01_lnr}; (b) The absolute difference between $u(N\Delta t, x_i)$ and $U^{N\Delta t}_{x_i}$ when $N=16000$ and $64000$.}
	\label{eg1_linear}
	\includegraphics[width=1.1\textwidth]{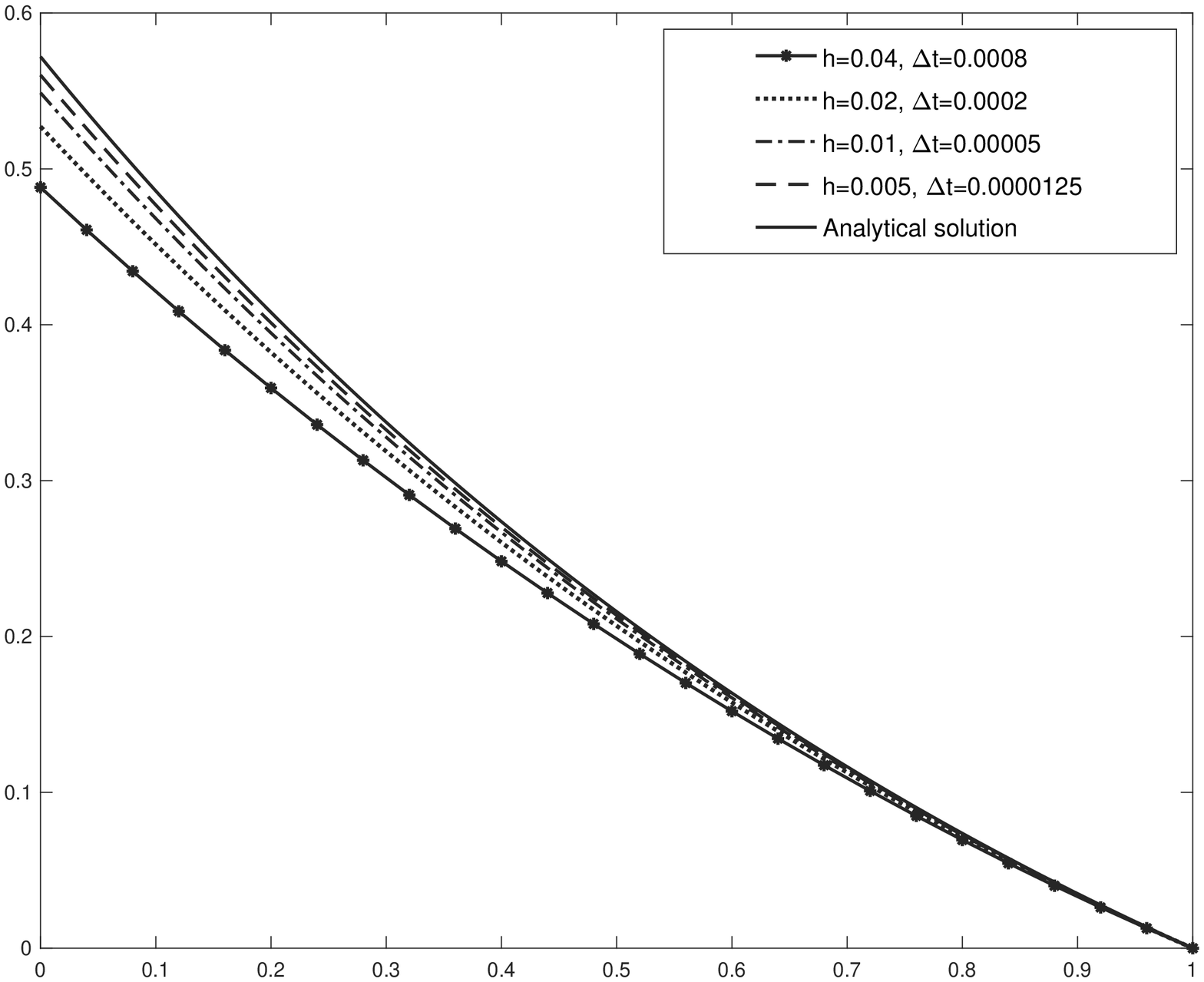}
	\caption{Numerical solution with different values of $h,\ \Delta t$ and the corresponding analytical solution to (\ref{mvd-1})--(\ref{mvd-2})  at $t=0.1$ with vital rates given in Example \ref{eg01_lnr}.}
	\label{eg1_linear_comp}
\end{figure}

\begin{example} \label{eg02_lnr}
	In this example also, we take $\ad =1$. The vital rates $d,B,$ and the initial data $u_0$ are 
	given by\medskip\\
	\centerline{$\displaystyle d(x,S)=3\Big(1+ \frac{1}{1 - x} \Big) ,\ B(x)=\frac{4}{1+e^{-2}} e^{-x}, \epsilon = 1.0, \ u_0(x)=e^{-x}(1 -x).$}
	
\end{example}
\noindent It is straightforward to verify that the solution to (\ref{mvd-1})--(\ref{mvd-2}) in this example is given by $$u(t,x)=e^{-t}e^{-x}(1 -x),\ \ t>0,\ x\in (0,1).$$ For the numerical computations, we have taken $h=0.002, \Delta t = 2.0\rm{x}10^{-6}$. In Figure \ref{eg2_linear}(a), we plot the analytical solution and the numerical solution when $N=9375,$ and $75000$. The absolute error is presented in Figure \ref{eg2_linear}(b). This example also validates that the numerical method proposed in this article gives an approximate solution which is in a good agreement with the exact solution. In Figure \ref{eg2_linear_comp}  the numerical solutions for different values of $h$ and $\Delta t$ at $t=0.15$ are shown, Moreover, the corresponding analytical solutions are also presented in the same figure for the comparison. We conclude from Figure \ref{eg2_linear_comp} that as $h$ and $\Delta t$ go to zero, the numerical solution converges to the analytical solution.

\begin{figure}[h]
	\includegraphics[width=1.2\textwidth]{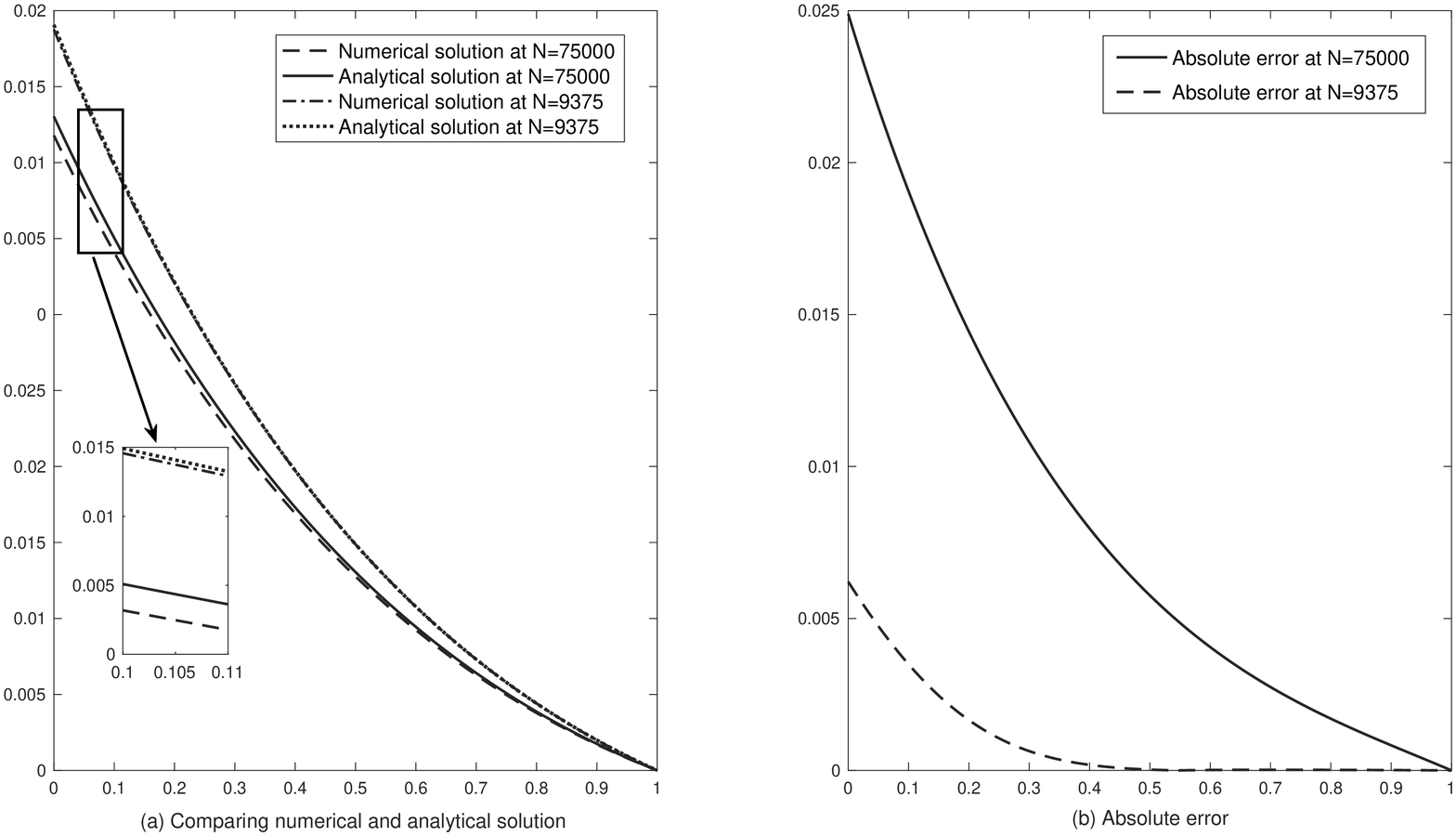}
	\caption{(a) The numerical solution and the analytical solution to (\ref{mvd-1})--(\ref{mvd-2}) with $N=9375,\ 75000$ with $d,B,g,u_0$ are given in Example \ref{eg02_lnr}; (b) The absolute difference between $u(N\Delta t, x_i)$ and $U^{N\Delta t}_{x_i}$ with $N=9375,\ 75000$.}
	\label{eg2_linear}
	\includegraphics[width=1.1\textwidth]{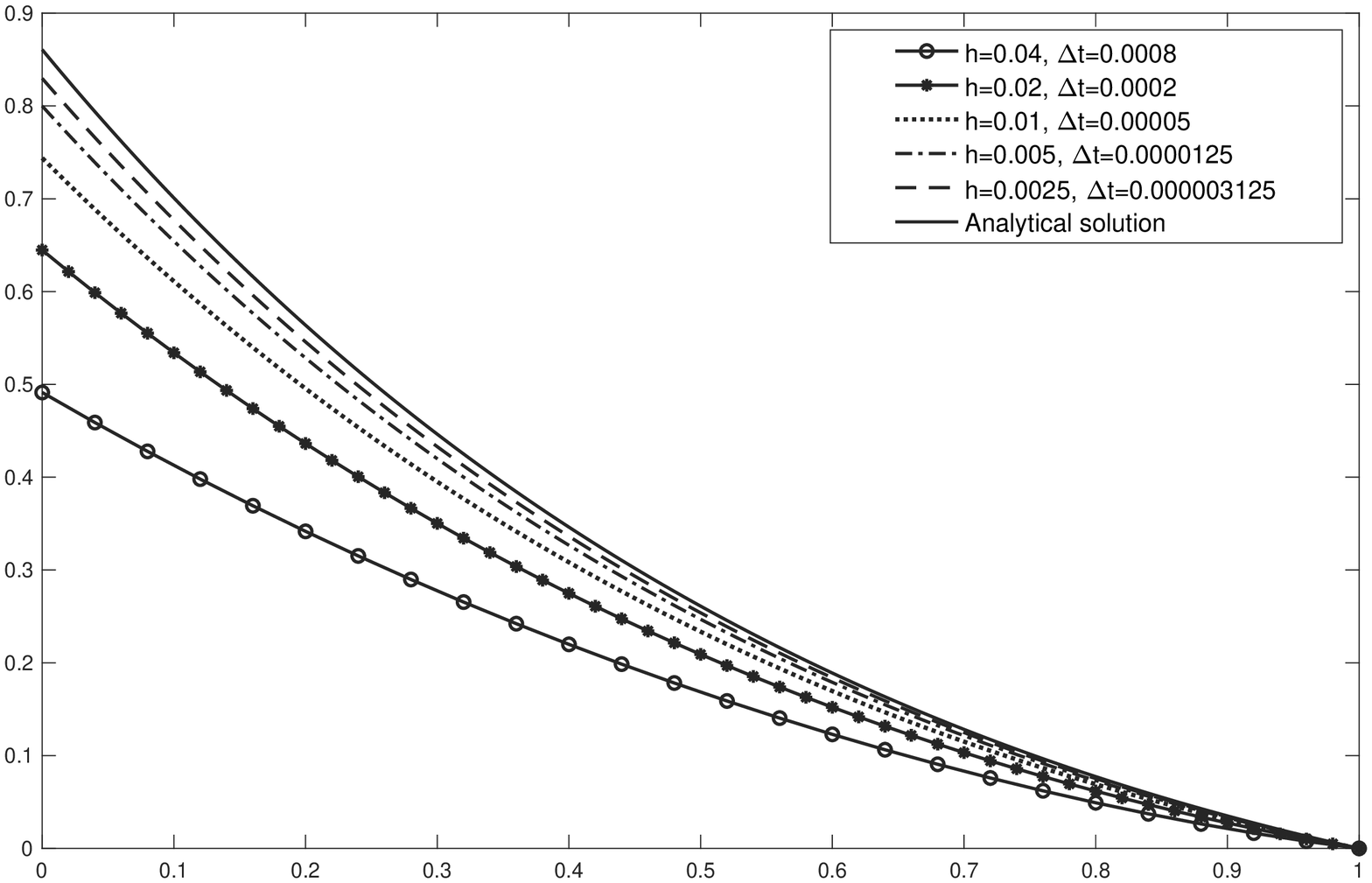}
	\caption{Numerical solution with different values of $h,\ \Delta t$ and the corresponding analytical solution to (\ref{mvd-1})--(\ref{mvd-2})  at $t=0.15$ with $d,B,g,u_0$ given in Example \ref{eg02_lnr}.}
	\label{eg2_linear_comp}
\end{figure}

\begin{example} \label{eg03_lnr}
	Here we take the mortality rate, the fertility rate and the initial data to be\medskip\\
	\centerline{$\displaystyle d(x,S)= \frac{1-e^{-\ad}-\ad e^{-\ad}}{S} + \frac{ e^{-x}-2e^{-\ad}}{2(e^{-x}-e^{-\ad})},\ B(x)=1+\frac{\ad e^{-\ad}}{1-e^{-\ad}-\ad e^{-\ad}}, $}\smallskip\\
	\[ u_0(x)=\frac{e^{-x}-e^{-\ad}}{2},\ \epsilon = 0.5,\ g(x)=x.\]
\end{example}
\noindent In this example the solution to (\ref{mvd-1})--(\ref{mvd-2}) is given by $$u(t,x)=\frac{1}{1+e^{-t}}(e^{-x}-e^{-\ad}),\ \ t>0,\ x\in (0,\ad).$$ In Figure  \ref{eg3_linear}(a), we present the analytical solution and the numerical solution when $N=20000$. In Figure \ref{eg3_linear}(b), we have shown the absolute error. For these computations, we have taken $h=0.001,\ \Delta t=5 \rm{x}10^{-7},$ $\ad =2$. In this example, we observe that $\|g'\|_{\infty}\|B\|_{\infty}\ad>1$ and this type of nonlinearity does not satisfy the hypotheses of Theorem \ref{convergence_thm}. However, we still notice that the numerical solution obtained using our scheme is indeed a good approximation to the analytical solution. This example suggests that assumption (\ref{cvg_assumption}) is sufficient but not necessary for the convergence of the numerical scheme presented in this article.

\begin{example} \label{eg_b_function}
	{\rm  In this example, we assume that the vital rates $d,B,$ and the initial data $u_0$ are 
		given by\medskip\\
		\centerline{$\displaystyle d(x,S)=1+S,\ B(x)=\frac{e^{-5x}}{10}, \epsilon = 0.5, g(x)=\sqrt{1+x}, \ad=7,$}\medskip\\
		\centerline{$u_0(x)= f(x)*\eta_{0.1}(x),$}\medskip\\
		where $\eta_{\epsilon}(x)$  is the standard mollifier with support in $[-0.1,0.1]$, $f*\eta_{0.1}$ denote the convolution of $f$ with $\eta_{0.1}$, and
		\begin{equation*}
		f(x)= 
		\left\{ \begin{array}{ll}
		1, &  {\rm if}\  0\leq x\leq 6, \medskip \\
		(x-7)^2, & {\rm if}\ 6\leq x\leq 7.
		\end {array} \right.
		\end{equation*}
		We have chosen $u_0(.)$ such that it is smooth. The step sizes for the simulations are $\Delta t=0.0002,$ and $ h=0.02$. 
		The numerical solutions for different values of $t$ are shown in Figure \ref{eg01_num}. From Figure \ref{eg01_num}, we observe that the numerical solution for $t\geq 3$ are very close to each other giving us an impression that the solution to \eqref{mvd-1}--\eqref{mvd-2} is converging to the corresponding steady state solution.  
		
		\begin{figure}[h]
			\includegraphics[width=1.2\textwidth]{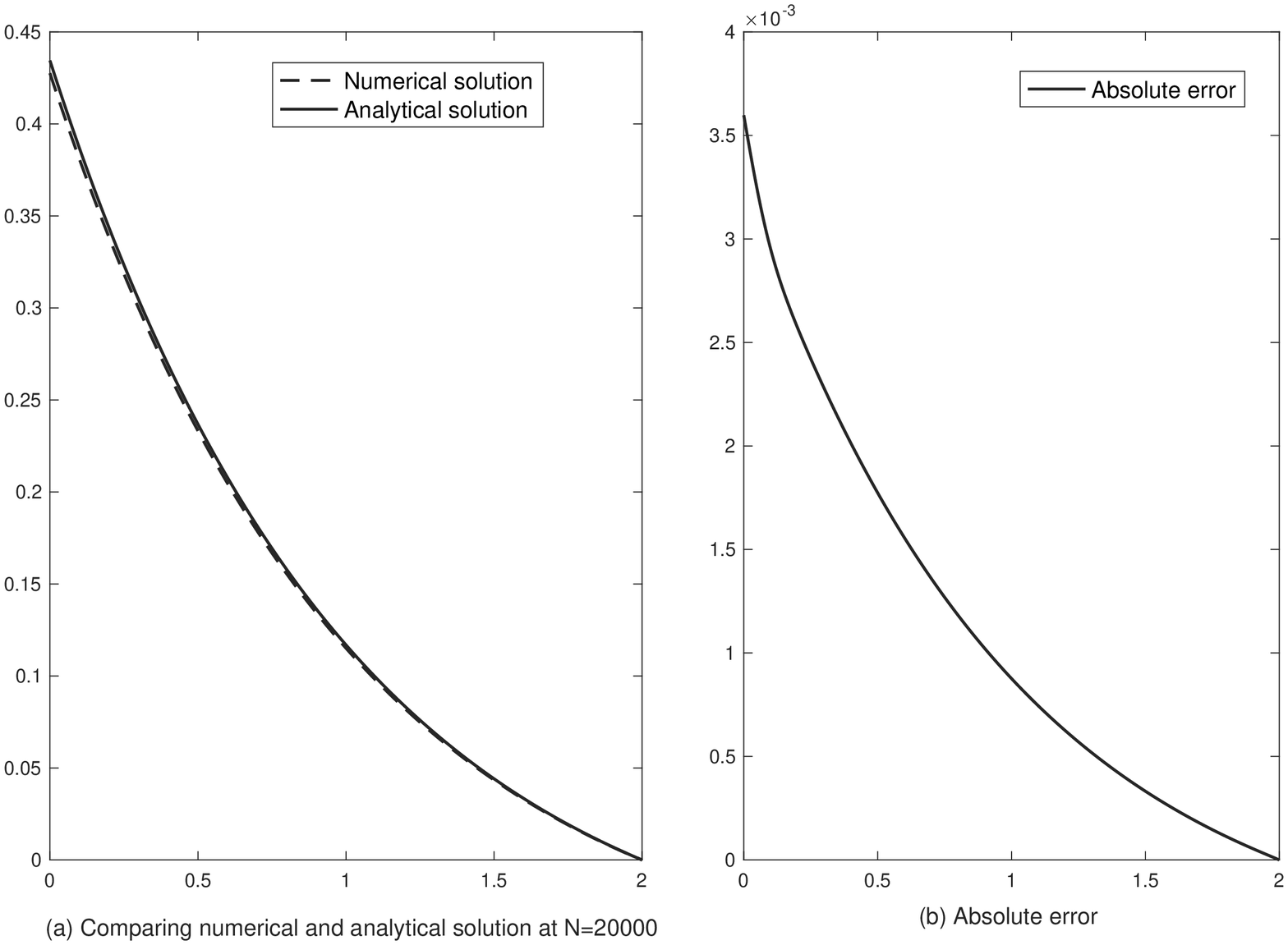}
			\caption{(a) The numerical solution and the analytical solution to (\ref{mvd-1})--(\ref{mvd-2}) with $N=20000$ with vital rates given in Example \ref{eg03_lnr}; (b) The absolute difference between $u(N\Delta t, x_i)$ and $U^{N\Delta t}_{x_i}$ with $N=20000$.}
			\label{eg3_linear}
			
			\includegraphics[width=1.1\textwidth]{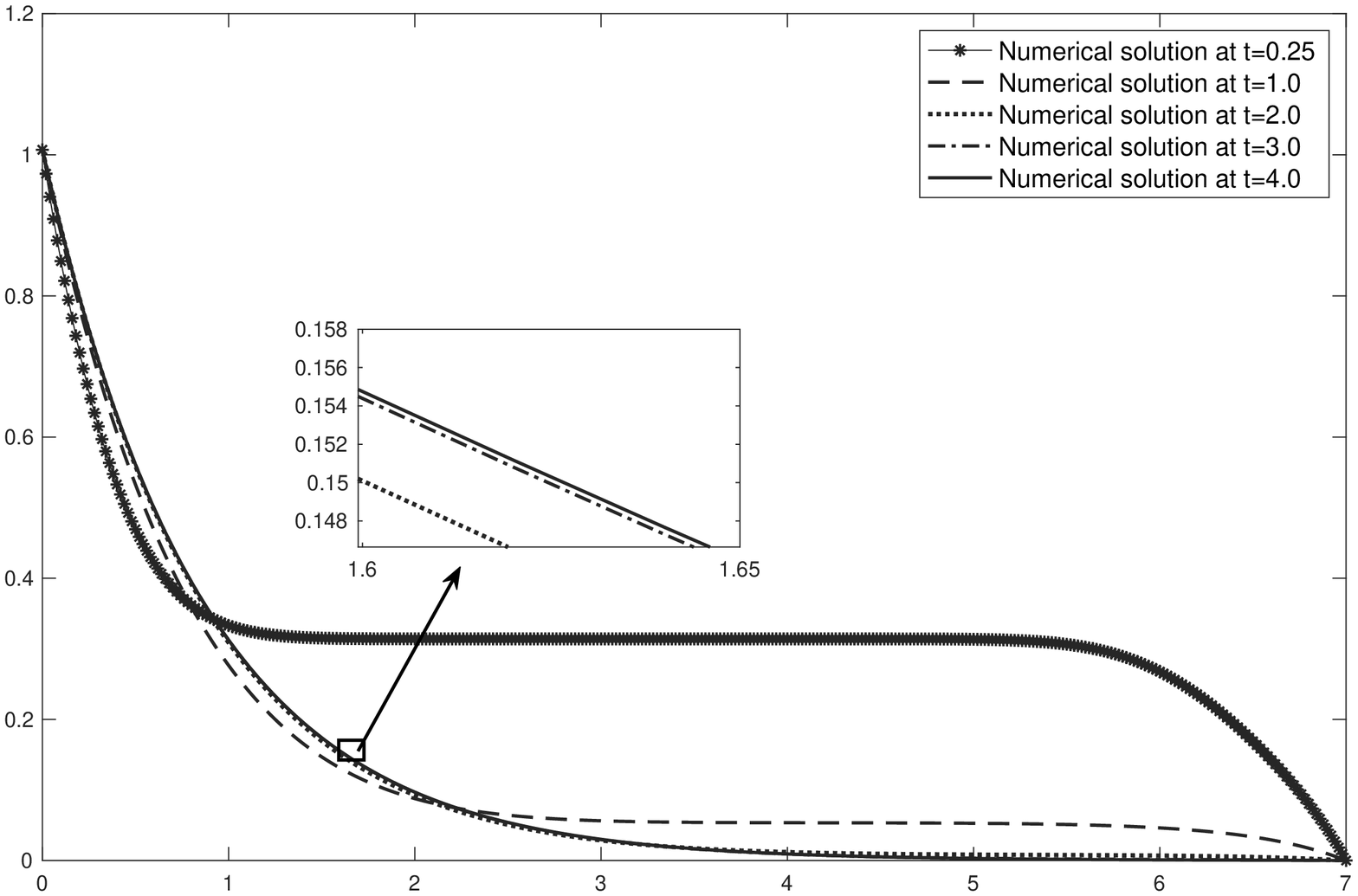}
			\caption{Numerical solution to (\ref{mvd-1})--(\ref{mvd-2}) with $N=5000$ with $d,B,g$ and $u_0$ given in Example \ref{eg_b_function}.}
			\label{eg01_num}
		\end{figure}
	}
\end{example}

\section{Conclusions}
In this paper, we have presented an implicit numerical scheme given in equation \eqref{num-sch}, for a nonlinear McKendrick--von Foerster equation with diffusion in age (MV-D). We have used the discretization based on the method of characteristics to $u_t+u_x$ and the finite difference method to the rest of the terms. We have also proved that the scheme we have considered is indeed convergent. As the approximation for the transport term is of the first order, the order of the proposed scheme turns out to be $O(h+\Delta t)$ (see equation (\ref{convergence_result})). Some numerical experiments are presented to revalidate the main theorem. From these simulations, numerical solutions are in a good  agreement with the corresponding analytical solutions. We have also presented an example where the hypothesis of Theorem \ref{convergence_thm} is not satisfied. Here also the  numerical solution obtained using our scheme provides a good approximation to the analytical solution. This shows that assumption (\ref{cvg_assumption}) in Theorem \ref{convergence_thm} can be weakened.

\section*{Acknowledgements} The authors would like to thank Prof. Jordi Ripoll for fruitful discussions. The first author would like to thank BITS Pilani, Hyderabad campus for providing financial support under Research Initiation Grant (BITS/GAU/RIG/2019/H0712).  The second author would like to acknowledge the support of DST-SERB, India, under MATRICS (MTR/2019/000848).

\end{document}